\newcommand\Q{{\mathbb Q}}
\newcommand\Gal{{\mathrm {Gal}}}
\newcommand\cP{{\mathcal P}}
\newcommand\cQ{{\mathcal Q}}
\newcommand\cR{{\mathcal R}}
\newcommand\ord{\mathrm{ord}}
\newtheorem{theorem}{Theorem}[section]
\newtheorem{lemma}[theorem]{Lemma}
\newtheorem{proposition}[theorem]{Proposition}
\newtheorem{question}[theorem]{Questions}
\newtheorem{remark}[theorem]{Remark}
\numberwithin{equation}{section}
\begin{document}

\title{Primes in arithmetic progressions and nonprimitive roots}
\author{Pieter Moree}
\address{Max-Planck-Institut f\"ur Mathematik, Vivatsgasse 7, D-53111 Bonn, Germany}
\email{moree@mpim-bonn.mpg.de}

\author{Min Sha}
\address{Department of Computing, Macquarie University, Sydney, NSW 2109, Australia}
\email{shamin2010@gmail.com}

\dedicatory{Dedicated to the memory of Prof.\,Christopher Hooley (1928--2018)}

\subjclass[2010]{11N13, 11N69}

\keywords{Primitive root, near-primitive root, Artin's primitive root conjecture, arithmetic progression}

\begin{abstract}
Let $p$ be a prime. If an integer $g$ generates a subgroup of index
$t$ in $(\mathbb Z/p\mathbb Z)^*,$ then we
say that $g$ is a $t$-near primitive root modulo $p$.
We point out the easy result that each coprime residue 
class contains a 
positive natural density subset of primes $p$ not having $g$ as a $t$-near primitive root and 
prove a more difficult variant.
\end{abstract}

\maketitle

\section{Introduction}

\subsection{Background}

Given a set of primes $S$, the limit 
$$
\delta(S) = \lim_{x \to \infty} \frac{\# \{p: \, p \in S, \, p\le x\}}{\# \{p: \, p\le x\}},
$$ 
if it exists, is called the \emph{natural density}  of $S$. (Here and
in the sequel the letter $p$ is used
to denote a prime number.)

For any  integer $g \not\in \{-1,0,1\}$, let $\cP_g$ be the set of primes $p$ such that $g$ is a primitive root modulo $p$, that is
$p\nmid g$ and the \emph{multiplicative order} of $g$ modulo $p,$
$\ord_p(g),$ equals  $p-1=\# (\mathbb Z/p\mathbb Z)^*,$ and
so $g$ is a generator of $(\mathbb Z/p\mathbb Z)^*.$
In 1927, Emil Artin conjectured that 
 the set $\cP_g$ is infinite
if $g$ is not a square; moreover he also gave a 
conjectural formula for its natural density $\delta(\cP_g)$; see \cite{Mor3} for more details.
There is no explicit value of $g$ known for which $\cP_g$ can be
unconditionally proved to be infinite. However 
Heath-Brown \cite{HB}, building on
earlier fundamental work by 
Gupta and Murty \cite{GM}, showed that, given any three distinct
primes $p_1,p_2$ and $p_3,$ there is at least one
$i$ such that $\cP_{p_i}$ is infinite.

In 1967, Hooley \cite{Hooley} established Artin's conjecture under the 
 Generalized Riemann Hypothesis (GRH) and determined $\delta(\cP_g)$. Ten years later, Lenstra \cite{Lenstra} considered 
 a wide class of generalizations of Artin's conjecture.
 For example, under GRH he showed that  the primes
 in $\cP_g$ that are in a prescribed arithmetic progression 
 have a  natural density and gave a  Galois theoretic
 formula for it. This was worked out 
 explicitly by the first author \cite{Mor1, Mor2},
 who showed that  $\delta(\cP_g)=r_gA,$ with $r_g$ an 
 explicit rational number and  the Artin constant
 $$
 A = \prod_{p}\left(1 - \frac{1}{p(p-1)}\right) = 0.373955\ldots . 
 $$
 Using a powerful and 
 very general algebraic method, this
 result was rederived in a very different way by 
 Lenstra et al.\,\cite{LMS}.
 
 For any integer $t \ge 1$, let
 $$
 \cP_g(t) = \{p:~  p \nmid g, \, p \equiv 1 ~({\rm mod~}t), \, \ord_p(g) = (p-1)/t  \}.
 $$
If $p$ is in $\cP_g(t)$, then it 
is said to have $g$ as a {\it $t$-near primitive root}. 
Assuming GRH, the 
first author \cite{Mor4} determined $\delta(\cP_g(t))$
in case $g>1$ is square-free.

A more refined problem is how the primes in 
$\cP_g(t)$ are distributed over arithmetic progressions. 
To this end, let $a,d\ge 1$ be coprime integers 
and define 
 $$
 \cP_g(t,d,a) = \{p: \, p \equiv a ~({\rm mod~}d), \, p\in \cP_g(t)\}. 
 $$
 By the prime number theorem for arithmetic progressions,  
\begin{equation}
\label{pnat}
\#\{p: \,p\le x, \,  p\equiv a ~({\rm mod~}d)\}\sim \frac{x}{\varphi(d)\log x},
\end{equation}
where $\varphi$ denotes Euler's totient function. 
 A straightforward combination of the ideas used in the study of near-primitive
 roots and those for primitive roots in arithmetic progression, allows 
 one to show, assuming GRH, that $\delta(\cP_g(t,d,a))$ exists and derive a Galois theoretic expression $\delta_G(\cP_g(t,d,a))$
 for it (see Hu et al.\,\cite[Theorem 3.1]{HKMS}). 
Moreover, it can
be unconditionally shown (see \cite[Equation (3.7)]{HKMS}) that 
\begin{equation}
\label{Galoisexpression}
\limsup_{x\rightarrow \infty}\frac{\#\{p\le x: \,  p\in \cP_g(t,d,a)\}}{\pi(x)}\le \delta_G(\cP_g(t,d,a)),
\end{equation}
where as usual $\pi(x)$ denotes the prime counting function. 
The idea of the proof is to apply the simple 
asymptotic sieve up to a range in which the unconditional
Chebotarev density theorem is valid.

On the basis of
 insights from \cite{LMS}, we know that $\delta_G(\cP_g(t,d,a))$
 equals a rational multiple of the Artin constant $A,$ 
 where the rational multiple can be worked out in full
 generality. However, this is likely to produce a result
 involving several case  distinctions (as in the
 restricted case where $t=1$ and in the case where $t$ is arbitrary
 and $g$ is square-free).
 In the much less general case $g=4$ and $t=2$, the 
 expression was explicitly worked out 
 in \cite{HKMS}; see Section 
 \ref{sec:application} for more background.

\subsection{Our considerations} 
 
 In this paper we study, motivated by the
following questions, the 
distribution of primes not having a prescribed near-primitive root in arithmetic
progressions. 

\begin{question}
\label{problem}
{\rm
Let $t\ge 1$ and $g\not\in \{-1,0,1\}$ be integers.  
Let $a, d$ be 
positive coprime integers. \\
{\rm (A)} Is the set 
\begin{align*}
\cQ_g(t,d,a) = \{p: \,p \equiv a ~({\rm mod~}d), \, p\not\in \cP_g(t)\}
\end{align*}
infinite? \\
{\rm (B)} Does the set $\cQ_g(t,d,a)$ have a natural density
and can it be computed?
}
\end{question}

Since $\cP_g(t,d,a) \cup \cQ_g(t,d,a) = \{p:\, p \equiv a ~({\rm mod~}d)\}$, 
if $\delta(\cP_g(t,d,a))$ exists, then using \eqref{pnat} we have 
$$
\delta(\cQ_g(t,d,a))=1/\varphi(d)-\delta(\cP_g(t,d,a)).
$$
Question B can currently be answered only assuming GRH. 
However, in this approach it is far from evident under
which conditions on the parameters $g,t,d$ and $a$ we have
$\delta(\cQ_g(t,d,a))>0$, thus guaranteeing the infinitude
of the set $\cQ_g(t,d,a).$

Unconditionally using \eqref{Galoisexpression} we infer that
$$
\liminf_{x\rightarrow \infty}\frac{\#\{p\le x: \,  p\in \cQ_g(t,d,a)\}}{\pi(x)}\ge \frac{1}{\varphi(d)}-\delta_G(\cP_g(t,d,a)).
$$

If there exists a prime $p_0\nmid t$ satisfying
both  
$p_0 \equiv a ~({\rm mod~}d)$ and
$p_0 \not\equiv 1 ~({\rm mod~}t)$, then all the primes
 $p \equiv p_0 ~({\rm mod~}dt)$ are in $\cQ_g(t,d,a)$ (due to $t \nmid (p-1)$). 
 By \eqref{pnat}, there are infinitely many 
primes  $p \equiv p_0 ~({\rm mod~}dt)$, and they have a positive natural density.
Thus, Question A is only nontrivial when 
$p \equiv a ~({\rm mod~}d)$ implies $p\mid t$ or
$p \equiv 1 ~({\rm mod~}t),$ which is true if and only if
\begin{equation}
\label{eq:cond}
t\mid d \quad \textrm{and} \quad  t\mid (a -1). 
\end{equation}

In this note we will see 
that answering Question A is actually also rather easy in 
case \eqref{eq:cond} is satisfied. The answer to Question A is yes, and we can be even a little bit
more precise on using Kummerian extensions of cyclotomic 
number fields $\mathbb Q(\zeta_n)$ with
 $\zeta_n=e^{2\pi i/n}$.
 
\begin{proposition}
\label{elementary} 
Let $g \not\in \{-1,0,1\}$ and $t\ge 1$ 
be integers. Let $a, d$ be 
positive coprime integers. 
Then, for any integer $q > 2$ coprime to $2dt$, the set $\cQ_g(t,d,a)$ contains a positive natural density 
subset of primes $p$ having natural density 
$$\frac{1}{[\Q(\zeta_d,\zeta_{q},g^{1/q}):\Q]}.$$
\end{proposition}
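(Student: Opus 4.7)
The plan is to exhibit a single central element $\sigma \in \Gal(L/\Q)$, where $L = \Q(\zeta_d, \zeta_q, g^{1/q})$, whose Chebotarev class sits entirely inside $\cQ_g(t,d,a)$; the stated density $1/[L:\Q]$ then drops out of the Chebotarev density theorem.

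Write $K = \Q(\zeta_q, g^{1/q})$, so that $L = K \cdot \Q(\zeta_d)$. The principal ingredient is a linear-disjointness statement: $K \cap \Q(\zeta_d) = \Q$. Any subfield of $K$ abelian over $\Q$ lies inside the maximal abelian subextension of $K/\Q$, and for $q > 2$ odd a standard Kummer-theoretic computation, via the semidirect product structure $\Gal(K/\Q) \cong \Gal(K/\Q(\zeta_q)) \rtimes (\Z/q\Z)^*$ (whose commutator subgroup exhausts the cyclic Kummer part), identifies this maximal abelian subextension as $\Q(\zeta_q)$. Combined with $\gcd(d,q) = 1$, this forces $K \cap \Q(\zeta_d) \subseteq \Q(\zeta_q) \cap \Q(\zeta_d) = \Q$. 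Hence restriction yields an isomorphism $\Gal(L/\Q) \cong \Gal(K/\Q) \times \Gal(\Q(\zeta_d)/\Q)$, and we let $\sigma$ be the preimage of $(\mathrm{id}_K, \sigma_a)$, where $\sigma_a : \zeta_d \mapsto \zeta_d^a$. Because $\Gal(\Q(\zeta_d)/\Q)$ is abelian, $\sigma$ commutes with every element of the direct product, so $\{\sigma\}$ is a conjugacy class of cardinality one.

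By Chebotarev's density theorem, the unramified primes $p$ with $\mathrm{Frob}_p = \sigma$ have natural density $1/[L:\Q]$. For such a $p$ the first coordinate $\mathrm{id}_K$ tells us that $p$ splits completely in $K$, so $p \equiv 1 \pmod q$ and $g \equiv h^q \pmod p$ for some integer $h$; the second coordinate yields $p \equiv a \pmod d$. Fermat's little theorem then gives $g^{(p-1)/q} \equiv h^{p-1} \equiv 1 \pmod p$, so $\ord_p(g) \mid (p-1)/q$. If $t \nmid p-1$ then $p \notin \cP_g(t)$ by definition; otherwise $\ord_p(g) = (p-1)/t$ would force $(p-1)/t \mid (p-1)/q$, i.e.\ $q \mid t$, contradicting $\gcd(q,t) = 1$ together with $q > 1$. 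Either way $p \in \cQ_g(t,d,a)$.

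The main obstacle is the Kummer-theoretic linear-disjointness step. The hypothesis $\gcd(q,2) = 1$ is essential: for $q = 2$ or $q = 4$ the maximal abelian subfield of $K$ can be strictly larger than $\Q(\zeta_q)$ (for instance $\Q(\sqrt{g})$ may sit inside $\Q(\zeta_d)$), and then no $\sigma \in \Gal(L/\Q)$ simultaneously fixing $K$ and acting as $\sigma_a$ on $\zeta_d$ need exist for every admissible $a$.
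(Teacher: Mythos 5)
Your proof is correct and follows essentially the same route as the paper: establish $\Q(\zeta_d)\cap\Q(\zeta_q,g^{1/q})=\Q$ for odd $q$ (the paper likewise uses that the non-cyclotomic part of $\Q(\zeta_q,g^{1/q})$ is nonabelian over $\Q$), apply Chebotarev to the singleton class acting as $\sigma_a$ on $\Q(\zeta_d)$ and trivially on $\Q(\zeta_q,g^{1/q})$, and note that $\ord_p(g)\mid(p-1)/q$ with $q\nmid t$ rules out $\ord_p(g)=(p-1)/t$. Your write-up just makes explicit the Kummer-theoretic and Frobenius details that the paper delegates to its Section~\ref{sec:pre}.
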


The field degree 
$[\Q(\zeta_d,\zeta_{q},g^{1/q}):\Q]=[\Q(\zeta_{\text{lcm}(d,q)},g^{1/q}):\Q]$ is not difficult to compute for any given $g, d$ and $q$;  see \cite[Lemma 1]{Moree2005} for the general result 
(which is a direct consequence of \cite[Proposition 4.1]{Wagstaff}). 
Using this computation the maximum density
of the $q$-dependent subsets arising in Proposition \ref{elementary} can be determined; 
see the next section for an example. If $\ell$  is a prime factor of $q$,
then $\Q(\zeta_d,\zeta_{\ell},g^{1/\ell})
\subseteq \Q(\zeta_d,\zeta_{q},g^{1/q})$, and so a priori
the maximum occurs in an odd prime.

We will also
establish a more difficult
variant of Proposition \ref{elementary}. 
Letting $g,t,d,a$ be as in Proposition \ref{elementary}, we define 
the set 
$$
\cR_g(t,d,a)= \{p:~  p \nmid g, \, p \equiv a ~({\rm mod~}d), \, p \equiv 1 ~({\rm mod~}t),    
\,\ord_p(g) \mid  (p-1)/t \}. 
$$ 
Clearly, we have $\cP_g(t,d,a) \subseteq \cR_g(t,d,a)$.   
Our purpose is to show that if $\cR_g(t,d,a)$ is not empty, then 
$\cR_g(t,d,a)$ contains a positive density subset of primes not contained in $\cP_g(t,d,a)$.  

\begin{theorem}
\label{main}
Let $g \not\in \{-1,0,1\}$ and $t\ge 1$ 
be integers. Let $a, d$ be 
positive coprime integers. 
Suppose the set $\cR_g(t,d,a)$ is not empty.  
Then, for any integer $q > 2$ coprime to $2dgt$, the set $\cR_g(t,d,a)$ contains a 
subset of primes $p$ 
for which $g$ is a non $t$-near primitive root modulo $p$ having natural density 
$$\frac{1}{[\Q(\zeta_d,\zeta_{qt},g^{1/qt}):\Q]}.$$
\end{theorem}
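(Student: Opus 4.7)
The plan is to apply the Chebotarev density theorem to the field $L := \Q(\zeta_d, \zeta_{qt}, g^{1/(qt)})$, detecting primes whose Frobenius restricts to the automorphism $\sigma_a : \zeta_d \mapsto \zeta_d^a$ on $\Q(\zeta_d)$ and to the identity on $\Q(\zeta_{qt}, g^{1/(qt)})$. If $p$ is such a prime, then $p \equiv a \pmod d$, $qt \mid p-1$, and $g \equiv h^{qt} \pmod p$ for some $h$; Fermat then gives $\ord_p(g) \mid (p-1)/(qt) < (p-1)/t$, so $p \in \cR_g(t,d,a)$ while $g$ fails to be a $t$-near primitive root modulo $p$. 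Hence every prime detected in this way contributes to the target subset.

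For the density, observe that since $L$ is the compositum of $\Q(\zeta_d)$ and $\Q(\zeta_{qt}, g^{1/(qt)})$, the restriction map
$$
\Gal(L/\Q) \hookrightarrow \Gal(\Q(\zeta_d)/\Q) \times \Gal(\Q(\zeta_{qt}, g^{1/(qt)})/\Q)
$$
is injective, so the prescribed element $\sigma \in \Gal(L/\Q)$ is unique if it exists. Moreover $\sigma$ is central: for any $h \in \Gal(L/\Q)$, the conjugate $h\sigma h^{-1}$ restricts to $\sigma_a$ on $\Q(\zeta_d)$ (since this Galois group is abelian) and to the identity on $\Q(\zeta_{qt}, g^{1/(qt)})$, so by uniqueness $h\sigma h^{-1} = \sigma$. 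The Chebotarev density theorem then delivers the claimed density $1/[L:\Q]$.

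The main obstacle is showing that such a $\sigma$ actually exists, equivalently that $\sigma_a$ acts trivially on the abelian field $F := \Q(\zeta_d) \cap \Q(\zeta_{qt}, g^{1/(qt)})$. Here the hypothesis $\cR_g(t,d,a) \neq \emptyset$ enters: the Frobenius of any $p_0 \in \cR_g(t,d,a)$ in the intermediate field $M := \Q(\zeta_d, \zeta_t, g^{1/t})$ restricts to $\sigma_a$ on $\Q(\zeta_d)$ and to the identity on $\Q(\zeta_t, g^{1/t})$, so $\sigma_a$ is automatically trivial on $F_0 := \Q(\zeta_d) \cap \Q(\zeta_t, g^{1/t})$. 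It therefore suffices to establish the equality $F = F_0$. I expect this to follow from the Wagstaff/Moree degree formula cited just after Proposition \ref{elementary}: the coprimality $\gcd(q, 2dgt) = 1$ should yield $[L:M] = [\Q(\zeta_{qt}, g^{1/(qt)}) : \Q(\zeta_t, g^{1/t})]$, since $\gcd(q,d)=1$ rules out new cyclotomic entanglement from adjoining $\zeta_q$ and $\gcd(q,2g)=1$ together with the oddness of $q$ rules out Kummer entanglement from adjoining a $q$-th root of $g^{1/t}$. The compositum identity $[L_1 \vee L_2 : L_1] = [L_2 : L_1 \cap L_2]$ then forces $[F:\Q] = [F_0:\Q]$ and hence $F = F_0$, since $F \supseteq F_0$. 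This degree-matching verification is the principal technical step of the proof.
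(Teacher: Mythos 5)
Your proposal is correct and follows essentially the same route as the paper: Theorem \ref{main} is deduced there from Proposition \ref{Rg}, which applies Chebotarev to $K_{qt}=\Q(\zeta_d,\zeta_{qt},g^{1/qt})$ with the Frobenius prescribed exactly as you do, and the existence of the required automorphism is obtained from the nonemptiness of $\cR_g(t,d,a)$ together with the equality $\Q(\zeta_d)\cap\Q(\zeta_{qt},g^{1/qt})=\Q(\zeta_d)\cap\Q(\zeta_{t},g^{1/t})$ (Lemma \ref{doorsnede}), which is your $F=F_0$. The degree-matching step you defer to the Wagstaff--Moree degree formula under $\gcd(q,2dgt)=1$ is precisely how the paper proves that lemma, so your outline matches the paper's argument in substance.
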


Again, given $d, g$ and $t,$ the maximum density of the 
$q$-dependent subsets arising
in the theorem can be determined, and for this it suffices to consider
primes $q\nmid 2dgt$.

Note that for any integer $q \ge 2$, each prime in $\cR_g(qt,d,a)$ is not contained in $\cP_g(t,d,a)$. 
So, Theorem~\ref{main} is derived directly from the following proposition, which might be of independent interest. 

\begin{proposition}
\label{Rg} 
Let $g \not\in \{-1,0,1\}$ and $t\ge 1$ 
be integers. Let $a, d$ be positive coprime integers. 
Suppose the set $\cR_g(t,d,a)$ is not empty.  
Then, for any positive integer $q$ coprime to $2dgt$, we have 
$$
\delta(\cR_g(qt,d,a))=
\frac{1}{[\Q(\zeta_d,\zeta_{qt},g^{1/qt}) : \Q]}. 
$$
\end{proposition}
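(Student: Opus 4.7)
The plan is to package the defining conditions of $\cR_g(qt,d,a)$ as a single Chebotarev condition in the Galois extension $L=\Q(\zeta_d,\zeta_{qt},g^{1/qt})$ of $\Q$. Set $G=\Gal(L/\Q)$. For any prime $p$ unramified in $L$ (i.e.\ for all but finitely many $p$), the conjunction
$$p\equiv a\pmod d,\qquad p\equiv 1\pmod{qt},\qquad \ord_p(g)\mid (p-1)/(qt)$$
is equivalent to $p$ splitting completely in $\Q(\zeta_{qt},g^{1/qt})$ while having Frobenius $\sigma_a$ in $\Gal(\Q(\zeta_d)/\Q)$. These together amount to the single requirement that the Artin symbol $(L/\Q,p)$ equal the specific element $\sigma\in G$ characterized by $\sigma(\zeta_d)=\zeta_d^{a}$, $\sigma(\zeta_{qt})=\zeta_{qt}$ and $\sigma(g^{1/qt})=g^{1/qt}$. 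By the Chebotarev density theorem, the density in question is then $|C_\sigma|/|G|$, where $C_\sigma$ denotes the conjugacy class of $\sigma$, provided $\sigma$ actually belongs to $G$.

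To verify existence of $\sigma$, write $L=K_1K_2$ with $K_1=\Q(\zeta_d)$ and $K_2=\Q(\zeta_{qt},g^{1/qt})$, and use the natural injection $G\hookrightarrow\Gal(K_1/\Q)\times\Gal(K_2/\Q)$, whose image is cut out by the compatibility condition on $K_1\cap K_2$. Since the maximal abelian subfield of $K_2$ is $\Q(\zeta_{qt})$ (a standard Kummer-theoretic input, the same one underlying the degree formula \cite[Lemma 1]{Moree2005}) and $\gcd(q,d)=1$, the intersection collapses to $K_1\cap K_2=\Q(\zeta_{\gcd(d,t)})$. Hence $\sigma$ exists if and only if $a\equiv 1\pmod{\gcd(d,t)}$, which is precisely where the hypothesis $\cR_g(t,d,a)\neq\emptyset$ is invoked: any $p_0$ in this set satisfies both $p_0\equiv a\pmod d$ and $p_0\equiv 1\pmod t$, forcing $a\equiv 1\pmod{\gcd(d,t)}$.

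It remains to show that $\sigma$ is central in $G$, so that $|C_\sigma|=1$. For an arbitrary $\tau\in G$, writing $\tau(\zeta_m)=\zeta_m^{c}$ with $m=\operatorname{lcm}(d,qt)$ and $\tau(g^{1/qt})=\zeta_{qt}^{k}g^{1/qt}$, the equalities $\sigma\tau(\zeta_m)=\tau\sigma(\zeta_m)$ and $\sigma\tau(g^{1/qt})=\tau\sigma(g^{1/qt})$ both follow at once from $\sigma(\zeta_{qt})=\zeta_{qt}$ and $\sigma(g^{1/qt})=g^{1/qt}$. Thus $\sigma$ is central, $|C_\sigma|=1$, and Chebotarev yields $\delta(\cR_g(qt,d,a))=1/|G|=1/[L:\Q]$. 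I expect the main obstacle to be the intersection computation $K_1\cap K_2=\Q(\zeta_{\gcd(d,t)})$, which hinges on the maximal abelian subfield of $K_2$ being exactly $\Q(\zeta_{qt})$; this is where the coprimality assumption $\gcd(q,2dgt)=1$ must be exploited carefully, so that adjoining $g^{1/qt}$ produces no unexpected abelian subfields. The remaining steps (the Frobenius translation, the centrality check, and discarding the finitely many ramified primes) are routine.
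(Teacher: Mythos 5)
The overall skeleton of your argument---translating membership in $\cR_g(qt,d,a)$ into a single Frobenius condition in $L=\Q(\zeta_d,\zeta_{qt},g^{1/qt})$, checking that the relevant element $\sigma$ is central so its class is a singleton, and applying Chebotarev---is the same as the paper's. The genuine gap is in your existence step for $\sigma$. Your ``standard Kummer-theoretic input'' that the maximal abelian subfield of $K_2=\Q(\zeta_{qt},g^{1/qt})$ is $\Q(\zeta_{qt})$ is false whenever $t$ is even: in that case $\sqrt{g}=(g^{1/qt})^{qt/2}\in K_2$, so $K_2$ contains the abelian field $\Q(\sqrt{g_0})$ (with $g_0$ the squarefree part of $g$), which in general is not contained in $\Q(\zeta_{qt})$. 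Consequently the intersection can strictly contain $\Q(\zeta_{\gcd(d,t)})$: for $g=5$, $t=2$, $q=3$, $d=5$ one has $\sqrt{5}\in\Q(\zeta_5)\cap\Q(\zeta_6,5^{1/6})$, while $\Q(\zeta_{\gcd(d,t)})=\Q$. Your derived criterion ``$\sigma$ exists if and only if $a\equiv 1\pmod{\gcd(d,t)}$'' therefore fails: with $a=2$ in this example the congruence holds vacuously, yet no automorphism of $\Q(\zeta_5,\zeta_6,5^{1/6})$ can send $\zeta_5\mapsto\zeta_5^{2}$ while fixing $5^{1/6}$, since it would then have to both move and fix $\sqrt 5$. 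So the only use you make of the hypothesis $\cR_g(t,d,a)\neq\emptyset$ (namely $a\equiv 1\pmod{\gcd(d,t)}$) extracts strictly less information than the existence of $\sigma$ requires.

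What the nonemptiness hypothesis actually gives---and what the paper exploits---is a prime $p_0$ whose Frobenius shows that $\sigma_a$ restricts to the identity on the \emph{full} intersection $I_t=\Q(\zeta_d)\cap\Q(\zeta_t,g^{1/t})$, which, as the example shows, may be larger than $\Q(\zeta_{\gcd(d,t)})$. The real content of the proof is then that this compatibility persists at level $qt$, i.e.\ that $I_{qt}=I_t$; this is the paper's Lemma~\ref{doorsnede}, proved by a degree comparison using \cite[Lemma 1]{Moree2005}, and it is precisely where the hypothesis $\gcd(q,2dgt)=1$ enters (the paper's remark with $g=21$, $d=3$, $t=10$, $q=7$ shows $I_{qt}\neq I_t$ can occur otherwise). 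Your argument as written does go through when $t$ is odd, since then $qt$ is odd and the maximal abelian subfield of $K_2$ is indeed $\Q(\zeta_{qt})$; but for even $t$ the intersection computation must be replaced by a stability statement of the form $I_{qt}=I_t$, which your proposal is missing.
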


\subsection{An application} 
\label{sec:application}
Proposition \ref{elementary} has an application to 
\emph{Genocchi numbers} $G_n,$ which are defined by 
$G_n = 2(1-2^n)B_n,$
where $B_n$ is the $n^{\text{th}}$ Bernoulli number. The Genocchi numbers are actually integers.
As introduced in \cite{HKMS}, if a prime $p>3$ divides at least one of the Genocchi numbers $G_2,G_4,\ldots, G_{p-3},$ it
is said to be \emph{$G$-irregular}
and \emph{$G$-regular} otherwise. 
The first fifteen G-irregular primes \cite{OEIS} are  
$$
17, 31, 37, 41, 43, 59, 67, 73, 89, 97, 101, 103, 109, 113, 127.
$$
The G-regularity of primes can 
be linked to the divisibility of certain class numbers of cyclotomic fields.  
Let $S$ be the set of infinite places of $\Q(\zeta_p)$ and $T$ the set of places above the prime 2. 
Denote by $h_{p,2}$ the \textit{$(S,T)$-refined class number} of $\Q(\zeta_p)$ and $h_{p,2}^{+}$ be the refined class number of $\Q(\zeta_p+\zeta_p^{-1})$ with respect to its infinite places and places above the prime 2
(for the definition of the refined class number of global fields, see for example Hu and Kim \cite[Section 2]{HK}). 
Define $h_{p,2}^{-} = h_{p,2} / h_{p,2}^{+}.$
It turns out that $h_{p,2}^{-}$ is an integer
(see \cite[Proof of Proposition 3.4]{HK}).
Recall that a \textit{Wieferich prime} is an odd prime $p$ such that 
$2^{p-1}\equiv 1~({\rm mod~}p^2).$
\begin{theorem}{\rm \cite[Theorem 1.5]{HKMS}}.
\label{thm:class}
 Let $p$ be an odd prime. 
 Then, if $p$ is G-irregular, we have $p\mid h_{p,2}^{-}$. 
 If furthermore  $p$ is not a Wieferich prime, the converse is also true. 
\end{theorem}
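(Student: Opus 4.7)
The strategy is to translate G-irregularity of $p$ into a divisibility condition on values of twisted Dirichlet $L$-functions at $s=0$, and then to identify $h_{p,2}^-$ with the relevant product via the refined analytic class number formula for $\Q(\zeta_p)$. First I would rewrite the G-irregularity criterion: since $G_n = 2(1-2^n)B_n$ and $p>3$ is odd, $p$ is G-irregular if and only if $p\mid (1-2^n)B_n$ for some even $n\in[2,p-3]$, i.e., either $p\mid B_n$ (the classical irregularity condition) or $\ord_p(2)\mid n$.

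Next I would invoke the refined analytic class number formula in the $(S,T)$-setup, with $S$ the archimedean places of $\Q(\zeta_p)$ and $T$ the places above $2$. After projecting onto the minus component, this expresses $h_{p,2}^-$ up to a $p$-adic unit as
\[
h_{p,2}^- \;\sim\; \prod_{\chi}(1-\chi(2))\Bigl(-\tfrac{1}{2}B_{1,\chi^{-1}}\Bigr),
\]
where $\chi$ ranges over odd non-trivial Dirichlet characters of conductor $p$. The Euler factor $(1-\chi(2))$ is precisely what distinguishes this from the classical relative class number $h_p^-$ appearing in Kummer's theorem on irregular primes.

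I would then reduce modulo $p$ via Kummer's congruences. Writing $\chi=\omega^{a}$ with $\omega$ the Teichm\"uller character and $a$ odd, one has $B_{1,\chi^{-1}}\equiv B_n/n\pmod{p}$ for the unique even $n\in\{2,\dots,p-1\}$ determined by $a$, and $\chi(2)\equiv 2^a\pmod{p}$. Thus each $\chi$-factor is congruent mod $p$, up to units, to $(1-2^n)B_n/n$, i.e., to a unit multiple of $G_n/n$. Picking the character matching a witnessing index of G-irregularity immediately yields $p\mid h_{p,2}^-$.

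For the converse, the subtle point is exactly the Wieferich condition. If $p\mid h_{p,2}^-$, then some factor is divisible by $p$: either a Bernoulli term $B_{1,\chi^{-1}}$ (yielding some admissible $n$ with $p\mid B_n$ via Kummer's congruences) or a factor $(1-\chi(2))$. The latter forces $\chi(2)\equiv 1\pmod{p}$, and the character/exponent dictionary then forces $\ord_p(2)\mid n$ for the matching $n\in[2,p-3]$, again giving $p\mid G_n$. The only way this inference can fail is if the $p$ absorbed into $h_{p,2}^-$ actually arises from a contribution of $p$-adic order $\ge 2$, which a short local computation reduces to $v_p(2^{p-1}-1)\ge 2$, i.e., $p$ being a Wieferich prime; excluding this case rules out exactly the failure. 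The main obstacle I foresee is pinning down the $p$-adic unit factor in the refined class number formula precisely enough to make the implication sharp; a secondary technical point is the bookkeeping matching odd characters $\chi$ to even indices $n\in[2,p-3]$ so that one never leaves the G-irregularity range.
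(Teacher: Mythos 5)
This theorem is quoted by the paper from \cite[Theorem 1.5]{HKMS}; the paper under review contains no proof of it, so your proposal can only be compared with the argument in \cite{HKMS}. In outline you are on the same track as that proof: express $h_{p,2}^{-}$ by the refined $(S,T)$ analytic class number formula as a product over odd characters of conductor $p$, reduce each factor modulo $p$ by Kummer-type congruences, and locate the Wieferich condition in the part of the product that escapes those congruences. However, two concrete points in your plan are wrong or missing, and they are exactly the points the theorem turns on.

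First, your Euler factor at $2$ is mis-normalized. In the $T$-modification the factor at a place $v$ above $2$ is $1-\chi(v)Nv^{1-s}$, which at $s=0$ is $1-2\chi(2)$, not $1-\chi(2)$. With $\chi=\omega^{n-1}$ ($n$ even) and $\chi(2)\equiv 2^{n-1}\pmod p$, the correct factor gives $(1-2\chi(2))B_{1,\chi}\equiv (1-2^{n})B_n/n\pmod p$, i.e.\ a unit multiple of $G_n/n$; your factor instead gives $(1-2^{n-1})B_n/n$, whose vanishing modulo $p$ is the condition $\ord_p(2)\mid (n-1)$ rather than $\ord_p(2)\mid n$, so your character-by-character congruences do not match the G-irregularity criterion you correctly set up in your first paragraph. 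Second, the Wieferich condition does not arise from ``a contribution of $p$-adic order $\ge 2$'' in a generic factor, but from the one odd character you relegate to bookkeeping: $\chi=\omega^{-1}$, which corresponds to $n=p-1$, outside the admissible range $[2,p-3]$. For this character the Kummer congruence is unavailable, since $B_{1,\omega^{-1}}$ is not $p$-integral (its $p$-adic valuation is $-1$), while the Euler factor satisfies $1-2\omega^{-1}(2)\equiv 1-2^{p-1}\equiv 0\pmod p$; computing with the Teichm\"uller lift of $2$ shows this exceptional factor has valuation $v_p(2^{p-1}-1)-1$. Hence it is a $p$-adic unit precisely when $p$ is not a Wieferich prime, and for Wieferich primes it forces $p\mid h_{p,2}^{-}$ without producing any divisibility $p\mid G_n$ with $n\in[2,p-3]$ --- this is the sole source of the non-Wieferich hypothesis in the converse. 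Your converse argument, which tacitly assumes every factor is $p$-integral and matches some $n\in[2,p-3]$, therefore has a genuine gap at exactly this character, and the ``short local computation'' you defer is in fact the heart of the proof.
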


It is easy to show that if $\ord_p(4) \ne (p-1)/2,$ then $p$ is G-irregular; see \cite[Theorem 1.6]{HKMS}.
Hence, taking $g=4$ and $t=2$ in Proposition~\ref{elementary} 
and noting that we have 
$[\Q(\zeta_{d},\zeta_{q},4^{1/q}):\Q]=\varphi(d)q(q-1)$ for any prime $q \nmid 2d$, we 
arrive at the following result.
\begin{proposition}
Let $a, d$ be positive coprime integers.
Let $q$ be the smallest prime not dividing  $2d$. 
The set of G-irregular primes $p$ satisfying $p\equiv a ~({\rm mod~}d)$ 
contains a subset having natural density 
$$
\frac{1}{\varphi(d)q(q-1)}.
$$
\end{proposition}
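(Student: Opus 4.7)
The plan is a direct combination of Proposition~\ref{elementary} (with $g=4$ and $t=2$) with the G-irregularity criterion from \cite[Theorem 1.6]{HKMS}, which states that any odd prime $p$ with $\ord_p(4)\ne (p-1)/2$ is G-irregular. Once these two ingredients are in place, the claim follows by inserting the known field-degree value recorded just before the proposition.

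First I would set $q$ to be the smallest prime not dividing $2d$. Since $q$ is odd and $q\nmid d$, it is coprime to $2dt=4d$, so Proposition~\ref{elementary} applies and yields a subset $\mathcal{S}\subseteq \cQ_4(2,d,a)$ of natural density $1/[\Q(\zeta_d,\zeta_q,4^{1/q}):\Q]$. Next I would unpack the definitions: by the definitions of $\cQ_g$ and $\cP_g$, a prime $p\in\mathcal{S}$ satisfies $p\equiv a\pmod d$ and $p\notin\cP_4(2)$; for an odd prime $p$, the latter is equivalent to $\ord_p(4)\ne(p-1)/2$, since $p\nmid 4$ and $p\equiv 1\pmod 2$ are automatic. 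Removing at most the single prime $p=2$ (which does not affect the natural density), the cited criterion gives that every prime in $\mathcal{S}$ is G-irregular and congruent to $a$ modulo $d$.

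Finally, I would substitute the value $[\Q(\zeta_d,\zeta_q,4^{1/q}):\Q]=\varphi(d)q(q-1)$ already recorded in the excerpt to obtain the claimed density $1/(\varphi(d)q(q-1))$. The main obstacle, if one can call it that, is essentially bookkeeping: making sure that the index parameters in $\cQ_g(t,d,a)$ line up correctly and verifying that discarding the single prime $p=2$ leaves the natural density unchanged. All substantive content is imported: the existence of the positive-density subset of non-$2$-near-primitive-root primes in the arithmetic progression comes from Proposition~\ref{elementary}, the translation to G-irregularity comes from \cite[Theorem 1.6]{HKMS}, and the Kummerian-cyclotomic degree $\varphi(d)q(q-1)$ follows from the general formula in \cite[Lemma 1]{Moree2005} together with the trivial observation that $4=2^2$ is not a $q$-th power in $\Q$ for any odd prime~$q$.
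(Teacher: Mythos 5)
Your proposal is correct and follows essentially the same route as the paper: apply Proposition~\ref{elementary} with $g=4$, $t=2$ and the smallest prime $q\nmid 2d$, invoke the criterion of \cite[Theorem 1.6]{HKMS} that $\ord_p(4)\ne (p-1)/2$ forces G-irregularity, and substitute the degree $[\Q(\zeta_d,\zeta_q,4^{1/q}):\Q]=\varphi(d)q(q-1)$. The only extra care you take (discarding finitely many small primes such as $p=2$, which does not affect the natural density) is harmless and consistent with the paper's argument.
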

This result is a weaker version of Theorem 1.11 in \cite{HKMS}, 
however its proof is much more elementary, and it still shows that each coprime residue class contains a subset of G-irregular primes 
having positive natural density.

\section{Preliminaries}
\label{sec:pre}
Given any integers $d,n\ge 1$ put
$K_n =\Q(\zeta_d,\zeta_{n},g^{1/n}).$
For $a$ coprime to $d,$ let $\sigma_a$ be the endomorphism of $\Q(\zeta_d)$ over $\Q$ defined by $\sigma_a (\zeta_d) = \zeta_d^{a}$. 
Let $C_n$ be the conjugacy class of elements of the Galois group $G_n =\Gal(K_n/\Q)$ such that for any $\tau_n \in C_n$, 
\begin{equation}
\label{compatible}
\tau_n \big |_{\Q(\zeta_d)} = \sigma_a, \qquad  \tau_n \big |_{\Q(\zeta_{n}, g^{1/n})} = \text{id}, 
\end{equation}
where `id' stands for the identity map.
Note that either $C_n$ is empty, or $C_n$ is nonempty and $|C_n|=1$. The latter case occurs if and only if 
\begin{equation}
\label{compatible2}
\tau_n \big |_{\Q(\zeta_d)\cap \Q(\zeta_{n}, g^{1/n})} = \text{id}.
\end{equation}
If this condition is satisfied,
then by the Chebotarev density theorem (in its
natural density form, cf. Serre \cite{Serre}, 
the original form being for
Dirichlet density),
the  primes unramified in $K_n$ and with Frobenius $C_n$ have natural density $1/[K_n:\Q]$.  
Note that the  primes unramified in $K_n$ are exactly the primes  $p \nmid dgn$. 
The first condition on $\tau_n$ ensures that the primes
$p \nmid dgn$ having $\tau_n$ as Frobenius satisfy 
$p \equiv a ~({\rm mod~}d)$. Likewise the second condition
ensures that such primes satisfy $\ord_p(g)\mid (p-1)/n$.

In particular, in case  $\Q(\zeta_d)$ and 
$\Q(\zeta_{n}, g^{1/n})$ are linearly disjoint over $\Q$, that is, 
\begin{equation}
\label{disjoint}
\Q(\zeta_d)\cap  
\Q(\zeta_{n}, g^{1/n})=\Q,
\end{equation}
we have $|C_n|=1$, and
 the primes $p \nmid dgn$ with
Frobenius $C_n$ satisfy 
$p \equiv a ~({\rm mod~}d)$ and 
$\ord_p(g)\mid (p-1)/n$, and they have natural density $1/[K_n:\Q]$.

\section{Proofs} 

\subsection{Proof of Proposition~\ref{elementary}}
Since
$q$ is odd, the extension $\Q(\zeta_{q}, g^{1/q})$ of $\Q(\zeta_q)$ is nonabelian
and $$\Q(\zeta_d)\cap  
\Q(\zeta_{q}, g^{1/q})=\Q(\zeta_d)\cap \Q(\zeta_q)=\Q(\zeta_{\gcd(d,q)})=\Q,$$
as $\gcd(q,d)=1$.  
Thus \eqref{disjoint} is satisfied 
and consequently there is 
a set with natural density $1/[K_q:\Q]$ of
primes $p$ satisfying 
$p \equiv a ~({\rm mod~}d)$ and 
$\ord_p(g)\mid (p-1)/q.$ Since by
assumption $q \nmid t$, it follows that for these
primes $p$, $\ord_p(g)\ne (p-1)/t$, and so for
them $g$ is a non $t$-near primitive root. 
This completes the proof.

\subsection{Proof of Proposition~\ref{Rg}}
From now on we assume that $g,t, a$ and $d$ are 
as in Proposition~\ref{Rg}. The proof of Proposition~\ref{Rg} 
rests on the Chebotarev density theorem
and the following lemma. Recall that $K_n =\Q(\zeta_d,\zeta_{n},g^{1/n})$. 

\begin{lemma}
\label{doorsnede}
Put 
$
I_n = \Q(\zeta_d) \cap \Q(\zeta_{n}, g^{1/n}).
$ 
Then, for any positive integer $q$ coprime to $2dgt$, 
we have $I_{qt}=I_t$.  
\end{lemma}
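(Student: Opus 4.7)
The plan is to reduce the claim to an intersection equality of number fields and then handle the latter via Galois theory and Kronecker--Weber. Write $F_n := \Q(\zeta_n, g^{1/n})$, so that $I_n = \Q(\zeta_d) \cap F_n$ and $K_n = \Q(\zeta_d)\cdot F_n$. The inclusion $I_t \subseteq I_{qt}$ is immediate from $F_t \subseteq F_{qt}$. For the reverse direction, the standard identity $\varphi(d)\,[F_n:\Q] = [K_n:\Q]\,[I_n:\Q]$ (an instance of $[EF:\Q][E\cap F:\Q]=[E:\Q][F:\Q]$) shows that $[I_{qt}:\Q]=[I_t:\Q]$ is equivalent to $[F_{qt}:F_t]=[K_{qt}:K_t]$. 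Writing $K_{qt}=K_t\cdot F_{qt}$ gives $[K_{qt}:K_t]=[F_{qt}:F_{qt}\cap K_t]$, so the whole lemma reduces to the intersection equality
$$F_{qt}\cap K_t \;=\; F_t.$$

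Set $L := F_{qt}\cap K_t\supseteq F_t$. The extension $K_t/F_t=F_t(\zeta_d)/F_t$ is cyclotomic and hence abelian, so $L/F_t$ is abelian as well. The next step is to show that the maximal abelian subextension of $F_{qt}/F_t$ is $F_t(\zeta_q)$; it will then follow by Galois correspondence that $L = F_t(\zeta_{q'})$ for some divisor $q'\mid q$. In the tower $F_t\subseteq F_t(\zeta_q)\subseteq F_{qt}$, a Kronecker--Weber and ramification argument yields $\Q(\zeta_q)\cap F_t=\Q$ (the maximal abelian subfield of $F_t$ sits in $\Q(\zeta_{M_0})$ for some $M_0$ supported on primes of $2tg$, and $\gcd(q,2tg)=1$ by hypothesis), so $[F_t(\zeta_q):F_t]=\varphi(q)$, while $F_{qt}/F_t(\zeta_q)$ is Kummer cyclic of degree $m\mid q$. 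Thus $\Gal(F_{qt}/F_t)\cong (\mathbb Z/q\mathbb Z)^*\ltimes\mathbb Z/m\mathbb Z$, with $(\mathbb Z/q\mathbb Z)^*$ acting by multiplication. The crux of the proof, which I expect to be the main obstacle, is then a commutator computation in this semidirect product: since $q$ is odd, $-1\in(\mathbb Z/q\mathbb Z)^*$ is available and $\gcd(2,m)=1$, from which a direct calculation gives that the commutator subgroup is the entire kernel $\mathbb Z/m\mathbb Z$. Hence the maximal abelian subextension really is $F_t(\zeta_q)$.

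To finish, $\zeta_{q'}\in L\subseteq K_t$ generates an abelian extension of $\Q$, so it lies in the maximal abelian subfield $K_t^{\mathrm{ab}}$ of $K_t/\Q$. By Kronecker--Weber combined with the standard discriminant bound for $K_t=\Q(\zeta_{\mathrm{lcm}(d,t)},g^{1/t})$, the field $K_t^{\mathrm{ab}}$ is contained in some $\Q(\zeta_M)$ with $M$ supported only on primes dividing $2\,\mathrm{lcm}(d,t)\,g$. The hypothesis $\gcd(q,2dgt)=1$ then forces $q'\mid\gcd(q,M)=1$, whence $q'=1$ and $L=F_t$, as required. The two ramification inputs, $\Q(\zeta_q)\cap F_t=\Q$ and $\gcd(q,M)=1$, are routine once $q$ is coprime to $2dgt$; the essential content of the lemma lies in the identification of the maximal abelian subextension of $F_{qt}/F_t$ via the commutator calculation.
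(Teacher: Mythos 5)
Your argument is correct in substance, but it takes a genuinely different route from the paper. The paper also starts from $I_t\subseteq I_{qt}$ and the identity $[I_n:\Q]=\varphi(d)\,[\Q(\zeta_n,g^{1/n}):\Q]/[K_n:\Q]$, but then it simply evaluates the resulting four degrees, namely $[\Q(\zeta_{qt},g^{1/qt}):\Q]$, $[\Q(\zeta_{qrt},g^{1/qt}):\Q]$ and their $q=1$ analogues with $rt=\mathrm{lcm}(d,t)$, by quoting the explicit degree formula of Lemma 1 of \cite{Moree2005} (going back to Wagstaff); the hypothesis $\gcd(q,2dgt)=1$ is what makes the two quotients agree in that formula. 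You instead reduce the lemma, via the same compositum identities, to the intersection statement $F_{qt}\cap K_t=F_t$ with $F_n=\Q(\zeta_n,g^{1/n})$, and prove it structurally: the maximal abelian subextension of $F_{qt}/F_t$ is $F_t(\zeta_q)$, by the commutator computation in the extension of $(\mathbb Z/q\mathbb Z)^*$ by the Kummer kernel (a lift of $-1$ conjugates $\tau$ to $\tau^{-1}$, so commutators give $\tau^{-2}$, and the kernel has odd order since $q$ is odd), and the remaining cyclotomic piece is killed by ramification/Kronecker--Weber because $q$ is coprime to all primes ramifying in $K_t$. Your proof is self-contained and shows conceptually where each coprimality condition enters; the paper's proof is much shorter but leans on a nontrivial quoted formula. (Note that for the commutator step the splitting of the group extension is irrelevant; only the conjugation action is used, which is fine.)

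One intermediate claim is misstated: from $L:=F_{qt}\cap K_t$ being abelian over $F_t$ and contained in $F_t(\zeta_q)$ you conclude $L=F_t(\zeta_{q'})$ for some $q'\mid q$. By the Galois correspondence (using $\Q(\zeta_q)\cap F_t=\Q$) the subextensions of $F_t(\zeta_q)/F_t$ are the fields $F_t\cdot E$ with $E$ an arbitrary subfield of $\Q(\zeta_q)$, e.g.\ quadratic or real subfields, not only the cyclotomic ones. This does not damage the argument: your concluding step applies verbatim to such an $E$, since $E\subseteq K_t$ is abelian over $\Q$, hence $E\subseteq K_t^{\mathrm{ab}}\subseteq\Q(\zeta_M)$ with $M$ supported on primes dividing $2dgt$, while also $E\subseteq\Q(\zeta_q)$ and $\gcd(q,M)=1$, so $E\subseteq\Q(\zeta_{\gcd(q,M)})=\Q$ (equivalently, $E/\Q$ is unramified everywhere and hence trivial by Minkowski). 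Thus $L=F_t$ and the lemma follows as you intended; just replace the cyclotomic-shape claim by this statement about a general subfield $E$.
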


\begin{proof}
Since $I_t \subseteq I_{qt}$, it suffices to show that $[I_{qt}: \Q] = [I_t : \Q]$.  
Obviously $[d,t]=rt$ for some positive integer $r$. 
By elementary Galois theory and noticing that $\gcd(q,dt)=1$, we 
see that
$$
[I_{qt} : \Q] = \frac{[\Q(\zeta_d) : \Q] \cdot [\Q(\zeta_{qt},g^{1/qt}) : \Q]}{[\Q(\zeta_d, \zeta_{qt},g^{1/qt}) : \Q]} 
 = \frac{\varphi(d) [\Q(\zeta_{qt},g^{1/qt}) : \Q]}{[\Q(\zeta_{qrt},g^{1/qt}) : \Q]},
$$
and, similarly,
$
[I_t : \Q] = \varphi(d) [\Q(\zeta_{t},g^{1/t}) : \Q]/[\Q(\zeta_{rt},g^{1/t}) : \Q].
$
Then, by Lemma 1 of \cite{Moree2005} and noticing $\gcd(q,2dgt)=1$,   
it is straightforward to deduce that 
$[I_{qt}: \Q] = [I_t : \Q]$. 
\end{proof}

\begin{remark}
{\rm
We remark that the condition $\gcd(q, 2dgt)=1$ cannot be removed. 
For example, choosing $g=21, d=3, t=10, q=7$ and using \cite[Lemma 2.4]{Mor2}, 
we have $I_t = \Q$ and $I_{qt} = \Q(\zeta_d)=\Q(\sqrt{-3})\ne I_t$. 
}
\end{remark}

\begin{proof}[Proof of Proposition~\ref{Rg}] 
By Lemma \ref{doorsnede} it follows that 
\begin{equation}
\label{iqi1}
I_{qt}=I_t.
\end{equation}
By assumption,  $\cR_g(t,d,a)$ is not empty.
Then, this implies that the two automorphisms
in \eqref{compatible} are compatible
and hence \eqref{compatible2} is satisfied,
which leads to the conclusion that
$\cR_g(t,d,a)$ is not only nonempty, but 
even has a positive natural density, moreover
 $\delta(\cR_g(t,d,a)) = [K_{t}:\Q]^{-1}$ by the discussions in Section~\ref{sec:pre}. 
So, there must be a $\tau_t\in C_t$ such that
$\tau_t |_{I_t}=\text{id},$ which by \eqref{iqi1} implies
the existence of an automorphism $\tau_{qt} \in C_{qt}$ such that
$\tau_{qt} |_{I_{qt}}=\text{id}$. Then, it follows from 
the discussions in Section~\ref{sec:pre} that
$\delta(\cR_g(qt,d,a))=[K_{qt}:\mathbb Q]^{-1}$. 
\end{proof}

\subsection{Proof of Theorem~\ref{main}}
\begin{proof}[Proof of Theorem~\ref{main}] 
A direct consequence of Proposition ~\ref{Rg}.
\end{proof}

\section*{Acknowledgement}

The research of Min Sha was supported by a Macquarie University Research Fellowship. 
The authors thank Christine McMeekin and Peter Stevenhagen for very helpful discussions and feedback. The latter pointed
out a serious oversight in an earlier version. The 
feedback of the referee
is also gratefully acknowledged.

\end{document}